\documentclass[a4paper,12pt]{article}
\usepackage[utf8]{inputenc}

\usepackage[a4paper,margin=2.3cm]{geometry}
\usepackage{amsmath}
\usepackage{amssymb}
\usepackage{amsthm}

\usepackage{tikz}

\newtheorem{thm}{Theorem}

\newtheorem{pro}{Proposition}
\newtheorem{lem}{Lemma}
\newtheorem*{principle}{Ader's Non-separation Principle}

\def\cone{\mathrm{cone}\,}
\def\R{\mathbb{R}}
\def\T{\hat T}
\newcommand{\Orth}{\mathbf{O}(2)}
\newcommand{\PD}[2]{\mathbf{PD}(#1,#2)}

\title{On the Banach-Mazur Ellipse}
\author{V. D. Babev\thanks{Faculty of Mathematics and Informatics, Sofia University, 5, James Bourchier Blvd, 1164 Sofia, Bulgaria; babev@fmi.uni-sofia.bg. Research partially supported by Grant No. 80-10-116 from 28.05.2025 of the Research Fund of Sofia University.}, M. Ivanov\thanks{Radiant Life Technologies Ltd., Nicosia, Cyprus; milen@radiant-life-technologies.com. Research partially supported by the Bulgarian National Science Fund under Grant No. KP-06-H92/6 (December 8, 2025).}, \fbox{R. Nikolov}} 
\date{February, 2026}

\begin{document}

\maketitle

\begin{abstract}
  We provide a new proof of Ader's \cite{ader} characterisation of the ellipse of minimal Banach-Mazur distance to the unit circle of a normed plane in terms of contact and extremal points.

  Our method reveals the relation of this problem to the Chebyshev alternance.
\end{abstract}

\medskip\noindent\textbf{Keywords and phrases:} Banach-Mazur distance, normed plane, distance ellipse, Chebyshev alternance

\medskip\noindent\textbf{2020 Mathematics Subject Classification:} Primary 46B20; Secondary 52A10, 41A50

\section{Introduction}
Recall that the Banach-Mazur distance between the isomorphic normed spaces $X$ and $Y$ is denoted by 
\begin{equation}
  \label{eq:def-bmd}
  d(X,Y) := \inf \{\|T\|\cdot\|T^{-1}\|:\ T:X\to Y\ \text{isomorphism}\},
\end{equation}
for details see e.g.\ \cite[Section 6.1.1]{pietsch}.

If $X$ and $Y$ are of one and the same finite dimension, then the minimum in \eqref{eq:def-bmd} is attained.

An important characteristic of a finite dimensional space is its distance to the Euclidean space. Here we study this characteristic in the simplest case: the plane.

So, for a 2-dimensional normed space $(X,\|\cdot\|)$
$$
  d_2(X) := d(X,\R^2),
$$
where $\R^2$ is considered with the standard Euclidean norm $|\cdot|$. It is easy to check that
\begin{equation}
  \label{eq:d2-inclusion}
  d_2(X) = \min \{\|T\|:\ T:X\to \R^2,\ T(B_X)\supset B_{\R^2}\},
\end{equation}
where $B_X$ and $S_X$ stand for the unit ball and the unit sphere of $X$ respectively. Indeed, $T(B_X)\supset B_{\R^2}$ is equivalent to $\|T^{-1}\| \le 1$, so we have that $d_2(X)$ is at most the right-hand side of \eqref{eq:d2-inclusion}. On the other hand, multiplying the operator $T$ by a real constant does not change $\|T\|\cdot\|T^{-1}\|$, so we may assume that $\|T^{-1}\| = 1$ in \eqref{eq:def-bmd}.

Using \textit{polar decomposition}, we can reduce \eqref{eq:d2-inclusion}. Note that if $U$ is in $\Orth$ -- the group of isometries of $\R^2$, that is, $U^{-1} = U^t$ if considered as a matrix -- then $\|UT\| = \max_{x\in S_X} |UTx| = \max_{x\in S_X} |Tx| = \|T\|$, and similarly $\|T^{-1}U^{-1}\| = \|T^{-1}\|$, so we can quotient by $\Orth$. Denote by $\PD{X}{\R^2}$ the operators from $X$ to $\R^2$ that are positive definite symmetric matrices in any basis. By \cite[Theorem~7.3.1, p.449]{hj} each isomorphism $T:X\to\R^2$ can be represented as $T=UT_1$, where $U\in\Orth$ and $T_1\in\PD{X}{\R^2}$. Therefore,
\begin{equation}
  \label{eq:d2-reduction}
  d_2(X) = \min \{\|T\|:\ T\in\PD{X}{\R^2},\ T(B_X)\supset B_{\R^2}\}.
\end{equation}

We prove in a new way the following characterisation, which is essentially contained in \cite{ader,mory}.
\begin{thm}
  \label{thm:main}
  Let $\dim X =2$. There is a unique $\T\in\PD{X}{\R^2}$ such that $\T(B_X)\supset B_{\R^2}$ and
  \begin{equation}
    \label{eq:tm=d2}
    \|\T\| = d_2(X).
  \end{equation}
  $\T$ is characterised by the following property:

  There are $x_i\in S_X$, $i=1,2$, with $x_1\neq \pm x_2$, and $y_i\in S_X$, $i=1,2$, with $y_1\neq \pm y_2$, such that:
  \begin{equation}
    \label{eq:x-distance}
    |\T x_i| = \|\T\|,\quad i=1,2,
  \end{equation}
  \begin{equation}
    \label{eq:y-contact}
    |\T y_i| = 1,\quad i=1,2,
  \end{equation}
  \begin{equation}
    \label{eq:x-between-y}
    y_1\in\cone \{x_1,x_2\},\quad y_2\in\cone\{-x_1,x_2\}.
  \end{equation}
\end{thm}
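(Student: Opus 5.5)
Set $P=\T^t\T$, so that $|\T x|^2=\langle Px,x\rangle$ and $P$ ranges over all positive definite quadratic forms on $X$. Then $\T(B_X)\supset B_{\R^2}$ means $\langle P y,y\rangle\ge 1$ for all $y\in S_X$, and $\|\T\|^2=\max_{x\in S_X}\langle Px,x\rangle$. Since $P\mapsto\T=P^{1/2}$ is a bijection onto $\PD{X}{\R^2}$, the problem becomes minimising the convex function $F(P)=\max_{S_X}\langle Px,x\rangle$ over the closed convex set $C=\{P\succ0:\ \min_{S_X}\langle Py,y\rangle\ge1\}$. Both $F$ and the constraints are linear in $P$ for each fixed point of $S_X$, and the space of symmetric $2\times2$ matrices is $3$-dimensional. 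Existence is already given by \eqref{eq:d2-reduction}; alternatively, $C$ intersected with a sublevel set of $F$ is compact.

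Uniqueness follows from the midpoint argument. Suppose $P_0\neq P_1$ are both optimal. Then $P_{1/2}=(P_0+P_1)/2$ is feasible with $F(P_{1/2})\le d_2^2$. Rescaling, $P_{1/2}/m$ with $m=\min_{S_X}\langle P_{1/2}y,y\rangle\ge1$, shows that $m=1$. I would then show that the characterisation forces a contradiction, so uniqueness will come after the characterisation.

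\emph{Necessity.} This is a linear-programming, Chebyshev-type optimality condition. Let $A=\{x\in S_X:\langle Px,x\rangle=F(P)\}$ and $B=\{y\in S_X:\langle Py,y\rangle=1\}$ at an optimum $P$. By the minimax or Kuhn--Tucker theorem, with finitely many active constraints by Carath\'eodory in dimension $3$, optimality is equivalent to the nonexistence of a symmetric $H$ with $\langle Hx,x\rangle<0$ on $A$ and $\langle Hy,y\rangle>0$ on $B$. (One needs $\langle Hy,y\rangle\ge0$ and strict on one side, using $P$ itself as a direction to fix the scaling.) Quadratic forms $\langle Hx,x\rangle$ on directions $x=(\cos t,\sin t)$ in the basis diagonalising $P$ are trigonometric polynomials $a+b\cos2t+c\sin2t$ of degree $1$ in $2t$. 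These form a Haar space of dimension $3$ on the circle of directions mod $\pi$, and sign patterns on points are achievable unless there are enough alternations. The key step is the following: if the points of $A$ and $B$ (as directions mod $\pi$) can be separated by two directions, i.e. $A$ lies in one arc and $B$ in the complementary arc, then $H=\pm(\text{a form vanishing at the two separating directions})$, e.g. $\sin(2t-\alpha)\sin$-type products, achieves the sign pattern. So at the optimum they must interlace, meaning at least $x_1,y_1,x_2,y_2$ alternate around the projective circle, which is exactly \eqref{eq:x-between-y}. We also need $A$ and $B$ each to have at least two directions. If $A$ were a single direction, one could pick $H$ negative there and positive on $B$ unless $B$ surrounds it, but alternation with a single $x$ is impossible on the projective circle. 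The same holds for $B$.

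\emph{Sufficiency.} Given the alternating configuration, suppose a feasible $Q$ has $F(Q)<F(P)$. Then $H=Q-P$ satisfies $\langle Hx_i,x_i\rangle<0$ and $\langle Hy_i,y_i\rangle\ge0$. A quadratic form $h(t)=a+b\cos2t+c\sin2t$ that changes sign on the four alternating directions mod $\pi$ must have three sign changes, or two zeros plus a weak condition, in a half-period. This is impossible for a nonzero degree-one trigonometric polynomial in $2t$, which has at most $2$ zeros mod $\pi$. Handling the non-strict inequality at the $y_i$ needs care: with $F(Q)\le F(P)$, $h\le0$ at $x_i$ and $h\ge0$ at $y_i$ in alternating order forces $h\equiv0$ unless the zeros coincide appropriately, and a counting argument shows $h=0$. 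This gives uniqueness directly as well: any two optimal $P,Q$ give $H$ with these weak signs, hence $H=0$.

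The main obstacle is the necessity direction, specifically producing the separating form $H$ and ruling out degenerate cases where $A$ or $B$ is a single direction. I would do this concretely with $h(t)=\sin(t-\alpha)\sin(t-\beta)$ (a quadratic form) for separating directions $\alpha,\beta$, and then perturb $P$ to $P+\varepsilon H$, rescaled to remain feasible, reducing $F$.
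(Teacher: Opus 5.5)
Your proof is correct, but it takes a different route from the paper's. You keep $P=\T^t\T$ as the unknown, which is the convex formulation the paper alludes to after Theorem~\ref{thm:main}. For $x=r(\varphi)(\cos\varphi,\sin\varphi)\in S_X$ one has $\langle Px,x\rangle=r(\varphi)^2\,\mathfrak R(a,b,c)(\varphi)=e^{2(f-g)(\varphi)}$, so your problem is the exponential of the paper's. It is linear in $(a,b,c)$, and scaling $P$ plays the role of the invariance \eqref{eq:inv-r}. The paper instead takes logarithms and does uniform Chebyshev approximation by the nonlinear family $\mathcal C$.

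Linearity is what makes your version shorter:
\begin{itemize}
\item Sufficiency and uniqueness reduce to counting roots of the single function $Q-P\in\mathrm{span}\{1,\cos 2t,\sin 2t\}$. A nonzero such function is $a+\rho\cos 2(t-\tau)$, which has no zero, one double zero, or two simple zeros mod $\pi$. Each case is incompatible with weak signs $\le 0,\ge 0,\le 0,\ge 0$ at four cyclically alternating directions.
\item The improving direction is the explicit form $\pm\sin(t-\alpha)\sin(t-\beta)$. Feasibility of $P+\varepsilon H$ then follows from compactness of $A$ and $B$ alone.
\item You therefore need neither the one-sided/uniform reduction of Proposition~\ref{prop:d=v/2}, nor the Hermite-type interpolation of Proposition~\ref{prop:elipses-3-pts-equial} and the derivative arguments in Propositions~\ref{prop:unical} and~\ref{prop:variation}.
\item You also do not need the Kuhn--Tucker/Carath\'eodory machinery you mention. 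Only the trivial perturbation direction is used.
\end{itemize}

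What the paper's route buys is independence from convexity. It uses only the interpolation property of $\mathcal C$ and its invariance under constants, so it transfers to other families. That is why the authors deliberately avoid the trigonometric-polynomial shortcut.

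When you write yours out, three small points need to be made explicit:
\begin{itemize}
\item The step from ``$A$ and $B$ cannot be separated by two directions'' to an alternating quadruple. Fix a direction $\theta$ of $A$, let $\eta_-,\eta_+$ be the first and last directions of $B$ in $[\theta,\theta+\pi]$, and note that either $A$ meets $(\eta_-,\eta_+)$ or the gaps on either side give a separating pair. This is exactly the paper's claim \eqref{eq:alt-*}.
\item $B\neq\emptyset$ at the optimum, by scaling $P$ down.
\item The trivial case $d_2(X)=1$, where $A=B=S_X$.
\end{itemize}
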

Here
$$
  \cone \{x,y\} := \{\alpha x + \beta y:\ \alpha,\beta \ge 0\}.
$$
Note that the condition characterising the optimal operator depends only on the operator itself and not on $d_2(X)$. This is to be expected, because the problem suitably stated is convex. Not strictly convex, however, so the uniqueness is not immediate.

As an illustration, consider the following picture.

\begin{center}
  \begin{tikzpicture}
    \draw[color=gray] (0,0) circle [radius = 2cm];
    \coordinate (y1) at (86:2);
    \coordinate (y1m) at (180+86:2);
    \filldraw (y1) circle (1pt);
    \node[above] at (y1) {$\hat Ty_2$};

    \coordinate (y2) at (26:2);
    \coordinate (y2m) at (180+26:2);
    \filldraw (y2) circle (1pt);
    \node[right] at (y2) {$\hat Ty_1$};

    \coordinate (x1) at (56:4/1.73205080757);
    \coordinate (x1m) at (180+56:4/1.73205080757);
    \filldraw (x1) circle (1pt);
    \node[right, yshift=1.6mm] at (x1) {$\hat Tx_2$};

    \draw[thick] (y1) -- (x1) -- (y2);
    \draw[thick] (y1m) -- (x1m) -- (y2m);

    \coordinate (a) at (-12:2);
    \coordinate (am) at (180-12:2);
    \coordinate (b) at (-42:4/1.73205080757);
    \coordinate (bm) at (180-42:4/1.73205080757);

    \draw[thick] (a) -- (b);
    \draw[thick] (am) -- (bm);
    \draw[thick] (y2) arc(26:-12:2);
    \draw[thick] (y2m) arc(180+26:180-12:2);

    \coordinate (u) at (272:2);
    \coordinate (um) at (92:2);
    \draw[thick] (y1m) arc(266:272:2);
    \draw[thick] (y1) arc(86:92:2);

    \coordinate (v) at (302:4/1.73205080757);
    \draw[thick] (u) -- (v);
    \coordinate (vm) at (122:4/1.73205080757);
    \draw[thick] (um) -- (vm);

    \draw[thick] (v) arc(302:318:4/1.73205080757);
    \draw[thick] (vm) arc(122:138:4/1.73205080757);

    \coordinate (x2) at (310:4/1.73205080757);
    \filldraw (x2) circle (1pt);
    \node[right] at (x2) {$\hat T x_1$};
  \end{tikzpicture}
\end{center}

From it we can also see one important, but immediate, corollary. Since one of the angles between $\T y_1$ and $\T y_2$, and between $\T y_2$ and $-\T y_1$ (say the one between $\T y_1$ and $\T y_2$ as on the picture) will be $\le \pi/2$; and since $\T x_2$ will be within the angle between the tangents to the unit circle at $\T y_1$ and $\T y_2$, due to convexity, the distance from $\T x_2$ to zero, that is $d_2(X)$, is at most $\sqrt{2}$. Therefore,
$$
  d_2(X) \le \sqrt{2}.
$$

Note that Theorem~\ref{thm:main} easily follows from
\begin{principle}[\cite{ader}]
  For the unique $\T$ there is no angle such that all points $x\in S_X$ with $|\T x| = \|\T\|$ are inside this angle and all points $y\in S_X$ with $|\T y| = 1$ are outside.
\end{principle}
Here, an angle is
$$
  \cone \{a,b\} \cup \cone \{-a,-b\}
$$
for some $a,b\in\R^2$.

The original work \cite{ader} states that there should be at most $4$ points realising Ader's Non-separation Principle, and for these $4$ points it is then clear that the alternance described in Theorem~\ref{thm:main} must be in place. By Maurey's result \cite{mory}, see also \cite{gru-ko}, the ellipse of minimal Banach-Mazur distance is unique, which yields the uniqueness part of Theorem~\ref{thm:main}.

Obviously, the problem can be re-stated more geometrically like: find the minimal $k\ge1$ such that there is an ellipse inscribed in $B_X$ whose homothetic image with ratio $k$ is circumscribed about $B_X$. We will actually be using this form below, but we mention it here to point out the relationship to the celebrated L\"owner--John ellipse, see e.g.\ \cite{john}, that is, the ellipse of maximal area inscribed in $B_X$. The L\"owner--John ellipsoid is extensively used for estimating the Banach-Mazur distance to Euclidean space, probably because Ader's result \cite{ader} had apparently been forgotten until Grundbacher and Kobos recently unearthed it, see \cite{gru-ko}.

However, if $B_X$ is not symmetric the estimate through the L\"owner--John ellipsoid can grow progressively worse with the dimension. Consider $\R^n$ and let $B_{X_n}$ be the convex hull of $B_{\R^n}$ and $\sqrt{n}e$, where $e\in S_{\R^n}$. Then $B_{\R^n}$ is the L\"owner--John ellipsoid for $B_{X_n}$, see \cite{rakov} for a nice and simple proof, so it gives only the generic estimate $\sqrt{n}$. On the other hand, by the rotational symmetry of $B_{X_n}$ around the axis through $e$, $d(X_n,\R^n)\le\sqrt{3}$.

Our interest in the specifically two-dimensional case comes from our previous works \cite{iv-tr,iv-pa-tr,nik1,nik2}. Of course, similar problems can also be considered in different metrics, like e.g.\ the Hausdorff, see \cite{kenderov,gru-ke,ke-ki}.

In conclusion, note that the convexity of $B_X$ will not be used at all in the proofs below, which is not surprising, as they go through Chebyshev approximation.

\section{An equivalent problem in polar co-ordinates}
  \label{sec:polar}
  In view of \eqref{eq:d2-reduction}, Theorem~\ref{thm:main} characterises the solution of the optimisation problem 
  $$
    \begin{cases}
      \|T\| \to \min\\
      T \in \PD{X}{\R^2},\quad \|T^{-1}\| \le 1.
    \end{cases}
  $$
  By substituting $T^{-1}$ in place of $T$, this problem can be rewritten as
  \begin{equation}
    \label{eq:t-1-prob}
    \begin{cases}
      \|T^{-1}\| \to \min\\
      T \in \PD{\R^2}{X},\quad \|T\| \le 1,
    \end{cases}
  \end{equation}
  which is more suited for our purposes. Theorem~\ref{thm:main} is immediately equivalent to the following Proposition~\ref{prop:equiv} with which we will be working from now on.
  \begin{pro}
    \label{prop:equiv}
    The only solution to \eqref{eq:t-1-prob}, say $\tilde T$, is characterised by the following property:

    There are $u_i,v_i\in S_{\R^2}$, $i=1,2$, with $u_1\neq \pm u_2$, $v_1\neq \pm v_2$, $v_1\in\cone \{u_1,u_2\}$, $v_2\in\cone \{-u_1,u_2\}$, and
    \begin{equation}
      \label{eq:equiv-alternance}
      \|\tilde Tu_i\| = 1/\|\tilde T^{-1}\|,\quad \|\tilde T v_i\| = 1,\quad i=1,2.
    \end{equation}
  \end{pro}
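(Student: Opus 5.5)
We will move to polar co-ordinates, as the section title suggests, and turn \eqref{eq:t-1-prob} into a Chebyshev-type approximation problem. Write a positive definite symmetric $T$ through its quadratic form: $T^2$ is symmetric positive definite, so for $u=(\cos\theta,\sin\theta)$ we have
$$
|Tu|^2=\langle T^2u,u\rangle=a+b\cos2\theta+c\sin2\theta,
$$
with $a>\sqrt{b^2+c^2}$. Put $\rho(\theta):=\|u\|^{-1}$, the radial function of $B_X$, which is continuous, positive and $\pi$-periodic. Then $\|Tu\|=|Tu|/\rho(\varphi)$, where $\varphi$ is the polar angle of $Tu$. It is more convenient to parametrise by the image direction. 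The ellipse $T(B_{\R^2})$ has radial function $r(\varphi)$ with
$$
r(\varphi)^{-2}=p(\varphi):=A+B\cos2\varphi+C\sin2\varphi,
$$
a positive trigonometric polynomial of degree $1$ in $2\varphi$, namely the quadratic form of $T^{-2}$. The constraint $\|T\|\le1$ says $r\le\rho$, i.e.\ $p\ge\rho^{-2}=:f$. The quantity $\|T^{-1}\|^{-1}=\min_\varphi \rho(\varphi)^{-1}r(\varphi)$ is the largest $t$ with $t\rho\le r$, i.e.\ $p\le t^{-2}f$. So the problem becomes: minimise $\max_\varphi p/f$ subject to $\min_\varphi p/f\ge1$ over the $3$-dimensional cone $\{p=A+B\cos2\varphi+C\sin2\varphi\}$. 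By homogeneity this is minimising $\max p/f$ over $p$ with $\min p/f=1$, equivalently minimising $\max\log(p/f)-\min\log(p/f)$. Every polar symmetric $T$ arises (a symmetric PD square root of $T^{2}$ is unique), so nothing is lost.

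Existence is by compactness after normalising. For the characterisation, the points where $p=f$ correspond to the contact directions $v_i$, and the points where $p=f\cdot\max(p/f)$ correspond to the directions $u_i$. Both live on the circle of directions modulo $\pi$, i.e.\ $\psi=2\varphi\in\R/2\pi\mathbb Z$. The cone condition in Proposition~\ref{prop:equiv} says exactly that in the $\psi$-circle the two contact points and the two extremal points alternate. Here $u_1\neq\pm u_2$ means the two points are distinct mod $2\pi$ in $\psi$. Moreover, $v_1\in\cone\{u_1,u_2\}$ and $v_2\in\cone\{-u_1,u_2\}$ place one $v$ on each of the two arcs cut by the $u$'s, and $v_i$ is distinct from the $u_j$ since $t<1$ unless $d_2=1$, which we treat trivially. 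This is a Chebyshev alternance with $4$ points, which is the right count for $3$ parameters modulo scaling.

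Sufficiency: suppose such an alternance holds for $p$, and suppose $q$ is strictly better, normalised so that $q\ge f$ and $\max q/f<\max p/f$. Then $h=q-p$ satisfies $h\ge0$ at the contact points of $p$ and $h<0$ at its extremal points. This already gives a strict sign change on each arc, so $h$ is a degree-one trigonometric polynomial in $\psi$ with at least $4$ sign changes counted cyclically. It therefore vanishes identically, a contradiction. The borderline zeros are the delicate point: we need $h<0$ at both $u$'s and $h\ge0$ at both $v$'s. Since $h(v_1)\ge0>h(u_1),h(u_2)$, the zeros of $h$ separate each $v$ from both neighbouring $u$'s, which gives $4$ zeros or a double zero, again impossible. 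Uniqueness follows the same way: if $p$ and $q$ are both optimal, consider $(p+q)/2$, which is optimal by convexity, and compare.

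Necessity, which is the main obstacle: if no alternance exists, then the extremal set $E$ and the contact set $K$, both closed, can be separated in the $\psi$-circle by two points, i.e.\ they lie in complementary arcs, apart from degenerate cases such as $E$ or $K$ being a single point up to antipodes. Then choose a degree-one trigonometric polynomial $h=\alpha+\beta\cos(\psi-\psi_0)$ vanishing at the two separating points, negative on the arc containing $E$ and positive on the arc containing $K$. Replacing $p$ by $p+\varepsilon h$ for small $\varepsilon$ lowers $\max p/f$ near $E$ while keeping $p\ge f$ near $K$. Away from $E\cup K$ there is slack, and continuity of $f$ with compactness gives a strict improvement, contradicting optimality. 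The careful part is making the compactness argument uniform, and handling the case where $E$ or $K$ meets the separating boundary. For the latter I would first perturb the separating points slightly into the gaps between the closed sets.
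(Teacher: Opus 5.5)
Your proposal is correct, and its overall structure matches the paper's. You use a four-point alternance on the circle of directions modulo $\pi$, get sufficiency by counting zeros of the difference of two competitors, get necessity by separating the extremal and contact sets with two directions and perturbing by a function of prescribed sign, and get existence by compactness.

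The real difference is the parametrisation. You work with the linear object $p=\rho^{-2}$, i.e.\ with $\mathfrak R(a,b,c)$ itself, and the multiplicative error $p/f$. The paper instead uses the logarithmic family $\mathcal C$ with additive error. In the paper's notation your $\log(p/f)$ is exactly $2(f-g)$, and your rescaling of $p$ is the invariance \eqref{eq:inv-r}, so Proposition~\ref{prop:d=v/2} is absorbed into your normalisation.

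Linearity gives you three shortcuts:
\begin{itemize}
\item The difference of two competitors is again of the form $\alpha+\beta\cos\psi+\gamma\sin\psi$. The single fact that such a function, if nonzero, has at most two zeros on the circle counted with multiplicity does the work of Lemma~\ref{lem:3-pts-determine}, Proposition~\ref{prop:elipses-3-pts-equial} and the case analysis in Proposition~\ref{prop:unical}.
\item The perturbation is the explicit $p+\varepsilon(\cos(\psi-\psi_0)-\cos\delta)$. This is precisely the shortcut the paper mentions after Proposition~\ref{prop:variation}.
\item Convexity of $p\mapsto\max p/f$ on $\{p\ge f\}$ gives uniqueness from the midpoint of two normalised optima. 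At the alternance points of the midpoint, both optima are forced to equal $f$, respectively $Mf$ with $M$ the optimal value, so their difference has four zeros.
\end{itemize}

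The paper deliberately avoids linearity and convexity and uses only the interpolation property. Its argument therefore transfers verbatim to any family satisfying Proposition~\ref{prop:elipses-3-pts-equial}. Moreover, Proposition~\ref{prop:unical} is proved with non-strict inequalities, so it shows in one stroke that an alternance forces uniqueness. Your sufficiency step only excludes strictly better competitors, so you need the extra midpoint/convexity step for uniqueness.

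Three small repairs:
\begin{itemize}
\item In the sufficiency count, say that on each of the two arcs cut out by $u_1,u_2$ you get at least two zeros counted with multiplicity, hence four in total. A double zero alone is not a contradiction.
\item The separation step has no degenerate cases. Fix one extremal direction and take the first and last contact directions after it, as in the proof of Proposition~\ref{prop:alternance}. This separates the two closed sets whenever they do not interlace, including when one of them is a single direction.
\item Your $\psi$ is an image angle, while $u_i,v_i$ live in the domain. Note that $\tilde T$ maps $\cone\{u_1,u_2\}$ onto $\cone\{\tilde Tu_1,\tilde Tu_2\}$, so the cone conditions transfer.
\end{itemize}
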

  Let a co-ordinate system be fixed in $X$ in which
  \begin{equation}
    \label{eq:r-*}
    S_X = \{r(\varphi)(\cos\varphi,\sin\varphi):\ \varphi\in\R\},
  \end{equation}
  for a strictly positive, $\pi$-periodic and continuous $r:\R\to\R^+$.

  In these fixed coordinates each $T\in\PD{\R^2}{X}$ is identified with a matrix
  $$
    T = \begin{pmatrix} 
      a & b \\ b & c 
    \end{pmatrix},\quad a,c>0,\quad \det T = ac-b^2>0.
  $$
  Obviously,
  $$
    x \in T(S_{\mathbb{R}^2}) \iff |T^{-1}x| = 1 \iff x^t T^{-2} x = 1.
  $$
  That is,  $T(S_{\mathbb{R}^2})$ is the ellipse:
  $$
    a_1 x_1^2 + 2 b_1 x_1x_2 + c_1 x_2^2 = 1,
  $$
  where
  $$
    \begin{pmatrix} a_1 & b_1 \\ b_1 & c_1 \end{pmatrix} = T^{-2},
  $$
  so $a_1,c_1 > 0$ and $b_1^2 < a_1c_1$. In polar co-ordinates then $T(S_{\mathbb{R}^2}) = \{\rho(\varphi)(\cos\varphi,\sin\varphi):\ \varphi\in\R\}$, where
  \begin{equation}
    \label{eq:rho-trig}
    \rho^{-2}(\varphi) = a_2 + b_2\cos 2\varphi + c_2\sin 2\varphi,
  \end{equation}
  with $a_2 = (a_1+c_1)/2$, $b_2 = (a_1-c_1)/2$ and $c_2 = b_1$, so
  \begin{equation}
    \label{eq:ellipse-cond}
    a_2 > 0,\quad a_2^2 > b_2^2 + c_2^2.
  \end{equation}
  Usually this is expressed in terms of $\cos2(\varphi-\theta)$, where $\theta$ is the tilt angle. Let $\mathcal E$ be the family of all ellipses in polar co-ordinates, that is, all curves in the form \eqref{eq:rho-trig} for all values of the parameters $a_2,b_2,c_2$ which satisfy \eqref{eq:ellipse-cond}. It is clear that the cone \eqref{eq:ellipse-cond} parametrises one-to-one the ellipses, as well as $\PD{\R^2}{X}$, which we will use when establishing the uniqueness part of our main result. Let
  \begin{equation}
    \label{eq:C-def}
    \mathcal{C} := \{\log \rho:\ \rho\in\mathcal{E}\}.
  \end{equation}
  Let also, see \eqref{eq:r-*},
  $$
    f := \log r.
  $$
  Since $\|T\| \le 1$ is equivalent to $T(S_{\mathbb{R}^2})\subset B_X$, which in polar co-ordinates translates to
  $$
    \rho \le r \iff g := \log \rho \le f,
  $$
  the constraint in \eqref{eq:t-1-prob} becomes $g\in\mathcal{C}$, $g \le f$. For the objective, since $\rho(\varphi)(\cos\varphi,\sin\varphi) \in T(S_{\R^2})$, we have $|T^{-1}(\rho(\varphi)(\cos\varphi,\sin\varphi))| = 1$, hence $|T^{-1}(\cos\varphi,\sin\varphi)| = 1/\rho(\varphi)$. For $x = r(\varphi)(\cos\varphi,\sin\varphi) \in S_X$,
  $$
    |T^{-1}x| = \frac{r(\varphi)}{\rho(\varphi)} = \exp (f(\varphi)-g(\varphi)),
  $$
  and the problem \eqref{eq:t-1-prob} is equivalent to
  $$
    \begin{cases}
      \max_\varphi \exp (f(\varphi)-g(\varphi)) \to \min\\
      g\in\mathcal{C},\quad g \le f.
    \end{cases}
  $$
  Since the exponential is strictly monotonically increasing, Proposition~\ref{prop:equiv} is equivalent to Proposition~\ref{prop:C-cal-equiv} below. There, for a $\pi$-periodic function $h$
  $$
    \|h\|_\infty := \max_\varphi |h(\varphi)|.
  $$
  \begin{pro}
    \label{prop:C-cal-equiv}
    The optimisation problem
    \begin{equation}
      \label{eq:C-cal-equiv}
      \begin{cases}
        \|f - g\|_\infty \to \min\\
        g\in\mathcal{C},\quad g \le f
      \end{cases}
    \end{equation}
    has a unique solution $\bar g$ characterised by the following property:

    There are points
    \begin{equation}
      \label{eq:alternance-pts}
      \varphi_1 < \psi_1 < \varphi_2 < \psi_2 < \varphi_1 + \pi
    \end{equation}
    such that for $i=1,2$
    \begin{equation}
      \label{eq:alternance-pts-values}
      f(\varphi_i) - \bar g(\varphi_i) = \|f-\bar g\|_\infty\text{ and }\bar g(\psi_i) = f(\psi_i).
    \end{equation}
  \end{pro}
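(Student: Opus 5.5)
The plan is to linearise the problem by passing from $g$ to the quadratic form it encodes. By \eqref{eq:rho-trig} and \eqref{eq:C-def}, every $g\in\mathcal C$ is $g=-\tfrac12\log q$, where
$$
  q(\varphi)=a_2+b_2\cos2\varphi+c_2\sin2\varphi
$$
is strictly positive, which is exactly \eqref{eq:ellipse-cond}. Put $w:=e^{-2f}>0$ and $\lambda:=e^{2\|f-g\|_\infty}$. Then the constraint $g\le f$ reads $q\ge w$, and $f-g\le M$ reads $q\le e^{2M}w$. So \eqref{eq:C-cal-equiv} becomes the problem
$$
  \lambda\to\min,\qquad q\in V,\qquad w\le q\le\lambda w,
$$
where $V=\mathrm{span}\{1,\cos2\varphi,\sin2\varphi\}$. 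Positivity of $q$ is automatic here because $w>0$. In this language the alternance conditions \eqref{eq:alternance-pts-values} read
$$
  \bar q(\varphi_i)=\bar\lambda w(\varphi_i),\qquad \bar q(\psi_i)=w(\psi_i).
$$
The key tool will be the Haar property of $V$ on the circle $\R/\pi\mathbb Z$: a nonzero $h\in V$ has at most $2$ zeros in a period, counted with multiplicity.

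\emph{Existence.} The constant $q\equiv\max w$ is feasible, so the infimum $\lambda^*$ is finite. Feasible $q$ with $\lambda\le\lambda^*+1$ satisfy $w\le q\le(\lambda^*+1)w$, so they form a bounded set in the $3$-dimensional space $V$. The constraints are closed, so a minimiser $\bar q$ exists by compactness. It automatically satisfies $\bar q\ge w>0$, so $\bar g:=-\frac12\log\bar q$ lies in $\mathcal C$.

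\emph{Sufficiency and uniqueness.} Suppose $\bar q$ with value $\bar\lambda$ has the alternance \eqref{eq:alternance-pts}, \eqref{eq:alternance-pts-values}, and let $q$ be any feasible function with value $\lambda'\le\bar\lambda$. Set $h=q-\bar q\in V$.
\begin{itemize}
  \item At $\psi_i$ we have $\bar q=w\le q$, so $h(\psi_i)\ge0$.
  \item At $\varphi_i$ we have $q\le\lambda'w\le\bar\lambda w=\bar q$, so $h(\varphi_i)\le0$.
\end{itemize}
Thus $h$ weakly alternates in sign at four cyclically ordered points of one period. I will then prove that a nonzero $h\in V$ cannot do this. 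If all four values are nonzero, there are four sign changes and hence at least $4$ zeros, a contradiction. If some values vanish, a zero at a node where $h$ does not change sign has even multiplicity. Counting with multiplicities again yields at least $3$ zeros, or forces $h$ to be a nonnegative (or nonpositive) multiple of $(\sin(\varphi-\alpha))^2$ vanishing at all nodes, which is impossible at four distinct points. Hence $h\equiv0$, so $q=\bar q$. This shows simultaneously that $\bar q$ is optimal and that it is the unique optimum. I expect this zero-counting with weak inequalities to be the main technical obstacle: the multiplicity bookkeeping must be done carefully. One workable route is to write $h(\varphi)=A\sin(2\varphi-\alpha)+B$ and check the cases $|B|<|A|$, $|B|=|A|$, $|B|>|A|$ directly.

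\emph{Necessity.} Let $\bar q$ be optimal with value $\bar\lambda$. If $\bar\lambda=1$, then $\bar q=w$ everywhere, and any four ordered points give the alternance. So assume $\bar\lambda>1$. Then the sets
$$
  A=\{\varphi:\bar q(\varphi)=\bar\lambda w(\varphi)\},\qquad B=\{\varphi:\bar q(\varphi)=w(\varphi)\}
$$
are disjoint and closed. Both are nonempty: if $B=\emptyset$, replacing $\bar q$ by $\bar q/\min(\bar q/w)$ lowers $\lambda$; if $A=\emptyset$, the maximum of $\bar q/w$ is already below $\bar\lambda$.

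If no alternance $\varphi_1<\psi_1<\varphi_2<\psi_2$ with $\varphi_i\in A$, $\psi_i\in B$ exists, then cyclically $A$ and $B$ each lie in one arc. So there are $\alpha<\beta<\alpha+\pi$, not belonging to $A\cup B$, with $A\subset(\alpha,\beta)$ and $B\subset(\beta,\alpha+\pi)$ modulo $\pi$, or the reverse. The function $p(\varphi)=-\sin(\varphi-\alpha)\sin(\varphi-\beta)$, up to sign, lies in $V$ because it is a combination of $1,\cos2\varphi,\sin2\varphi$. It is strictly negative on a neighbourhood of $A$ and strictly positive on a neighbourhood of $B$.

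By compactness, for small $\varepsilon>0$ the function $q_\varepsilon=\bar q+\varepsilon p$ satisfies $q_\varepsilon>w$ and $q_\varepsilon<\bar\lambda w$ everywhere. Away from the neighbourhoods of $A$ and $B$ both inequalities hold with margin, and near them the sign of $p$ helps. Then $q_\varepsilon/\min(q_\varepsilon/w)$ is feasible with value strictly less than $\bar\lambda$, contradicting optimality. Hence an alternance exists, which completes the characterisation.
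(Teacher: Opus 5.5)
Your proof is correct, and it takes a genuinely different route from the paper. You linearise: with $q=e^{-2g}$ and $w=e^{-2f}$, the one-sided problem becomes the semi-infinite linear programme $\lambda\to\min$, $w\le q\le\lambda w$, $q\in V$, and you work with it directly. The paper instead stays in the nonlinear family $\mathcal C$. It first replaces the one-sided problem by the uniform Chebyshev problem, using the translation invariance \eqref{eq:inv-r} (Proposition~\ref{prop:d=v/2}). It then proves uniqueness, alternance and existence for the uniform problem (Propositions~\ref{prop:unical}, \ref{prop:alternance}, \ref{prop:final}), relying only on the interpolation property of Proposition~\ref{prop:elipses-3-pts-equial}.

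Your route is shorter and makes the convexity of the problem visible. Existence is plain compactness in the $3$-dimensional space $V$: the bound $q\ge w$ keeps limits inside the positivity cone, so nothing like the degenerate case $b'=a$ of Proposition~\ref{prop:final} arises. The variation in the necessity part is just $\bar q+\varepsilon p$ with $p=\pm\sin(\varphi-\alpha)\sin(\varphi-\beta)\in V$. This is exactly the ``add a trigonometric polynomial'' shortcut the paper mentions after Proposition~\ref{prop:variation} and deliberately avoids. Translation invariance survives only as the scaling invariance of $V$, which you use to show $A,B\neq\emptyset$.

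The paper's route gives the explicit link to classical uniform Chebyshev alternance, which is its stated purpose. Its argument also works for any family with the interpolation property, not only for monotone images of a linear Haar space. The uniqueness step is the same Haar-space argument in both proofs.

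Two places in your write-up need one more line each.
\begin{itemize}
\item In the sign-definite case $h=c\sin^2(\varphi-\alpha)$, the contradiction is not that $h$ vanishes ``at all nodes''. It is that $h$ must vanish at both $\varphi_1,\varphi_2$ (or at both $\psi_1,\psi_2$), which are distinct mod $\pi$, while $h$ has a single zero per period. Your explicit case split of $h=A\sin(2\varphi-\alpha)+B$ handles all cases cleanly.
\item The ``two complementary arcs'' claim follows by fixing $a\in A$ and taking the first and last points $b_1,b_2$ of $B$ in $[a,a+\pi]$. If there is no alternance, then $A\cap[b_1,b_2]=\emptyset$, and enlarging $[b_1,b_2]$ slightly gives $\alpha,\beta$.
\end{itemize}
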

  We will prove this proposition in the next section.

\section{Chebyshev Alternance of second order on the circle}
  \label{sec:Chebyshev}
  Let $C_\pi(\R)$ be the Banach space of all continuous $\pi$-periodic functions on $\R$ with the norm $\|\cdot\|_\infty$. 

  Let $\mathfrak R$ be the function that maps the triple $(a,b,c)\in\R^3$ to $\mathfrak {R}(a,b,c) \in C_\pi(\R)$ given by
  $$
    \mathfrak {R}(a,b,c) := a + b\cos 2\varphi + c\sin 2\varphi,
  $$  
  see \eqref{eq:rho-trig}. So, if $K$ is the cone given by \eqref{eq:ellipse-cond} then
  $$
    \mathcal{C} = \{-(1/2)\log\mathfrak{R}(a,b,c) : (a,b,c) \in K\}.
  $$

  Obviously, $\mathcal{C}$ is invariant under addition of a constant, that is,
  \begin{equation}
    \label{eq:inv-r}
    \forall r\in \R,\ \forall g \in \mathcal{C} \Rightarrow g+r \in \mathcal{C}.
  \end{equation}
  This fact allows considering the standard Chebyshev, that is, \textit{uniform}, approximation instead of the one-sided one \eqref{eq:C-cal-equiv}.

  \begin{pro}
    \label{prop:d=v/2}
    Let $\mathfrak{d}$ be the distance from $f$ to $\mathcal{C}$, that is, the value of
    \begin{equation}
      \label{eq:Ch}
      \begin{cases}
        \|f - g\|_\infty \to \min\\
        g\in\mathcal{C}.
      \end{cases}
    \end{equation}
    The function $\bar g \in \mathcal{C}$ is a solution to \eqref{eq:Ch} if and only if the function $(\bar g - \mathfrak{d})\in \mathcal{C}$ is a solution to \eqref{eq:C-cal-equiv}.
  \end{pro}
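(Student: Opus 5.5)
The plan is to show that the two optimal values are related by a factor of two and that the shift $g\mapsto g-\mathfrak d$ carries optimal points of one problem exactly onto optimal points of the other. Write $\mathfrak v$ for the value of \eqref{eq:C-cal-equiv}. The only structural fact needed is the shift invariance \eqref{eq:inv-r}; convexity plays no role. For any $g\in\mathcal C$ put
\[
m(g):=\min_\varphi (f-g)(\varphi),\qquad M(g):=\max_\varphi (f-g)(\varphi).
\]
These extrema are attained because $f$ and $g$ are continuous and $\pi$-periodic.

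First I will prove $\mathfrak v\le 2\mathfrak d$. Take any $g\in\mathcal C$. Its shift $g+m(g)$ lies in $\mathcal C$ by \eqref{eq:inv-r} and satisfies $g+m(g)\le f$, so it is admissible for \eqref{eq:C-cal-equiv}. Its objective value is
\[
\|f-g-m(g)\|_\infty = M(g)-m(g)\le 2\|f-g\|_\infty .
\]
Conversely, for any admissible $h$ of \eqref{eq:C-cal-equiv} we have $f-h\ge 0$, so $\|f-h\|_\infty=M(h)$. The function $h+M(h)/2\in\mathcal C$ satisfies $\|f-h-M(h)/2\|_\infty\le M(h)/2$. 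Hence $\mathfrak d\le \mathfrak v/2$, and together with the first bound this gives $\mathfrak v=2\mathfrak d$.

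The characterisation then follows from these two inequalities.

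Suppose $\bar g$ solves \eqref{eq:Ch}, so $\|f-\bar g\|_\infty=\mathfrak d$. Then $m(\bar g)\ge-\mathfrak d$ and $M(\bar g)\le\mathfrak d$. We also need $M(\bar g)-m(\bar g)\ge\mathfrak v=2\mathfrak d$, because $\bar g+m(\bar g)$ is admissible. This forces $m(\bar g)=-\mathfrak d$ and $M(\bar g)=\mathfrak d$. Consequently $\bar g-\mathfrak d=\bar g+m(\bar g)$ is admissible and has objective value $2\mathfrak d=\mathfrak v$, so it is a solution of \eqref{eq:C-cal-equiv}.

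Conversely, suppose $\bar g-\mathfrak d$ solves \eqref{eq:C-cal-equiv}. Then $0\le f-\bar g+\mathfrak d\le 2\mathfrak d$, which gives $|f-\bar g|\le\mathfrak d$. So $\bar g$ attains the distance $\mathfrak d$ and solves \eqref{eq:Ch}.

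There is no real obstacle here. The only care needed is to keep the sign of the shift straight: the one-sided solution lies below $f$ by exactly $\mathfrak d$ relative to the uniform one. Existence of minimisers is not used, since each direction starts from a given solution.
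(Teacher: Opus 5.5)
Your proof is correct and uses the same two shifts as the paper: push a uniform approximant down until it lies below $f$ (giving $\mathfrak v\le 2\mathfrak d$), and lift a one-sided approximant by half its deviation (giving $\mathfrak d\le\mathfrak v/2$). The only difference is that you compare against the infimum $\mathfrak v$ over arbitrary admissible functions rather than against a chosen minimiser $g_2$ of \eqref{eq:C-cal-equiv}, so your version avoids the paper's tacit assumption that minimisers of both problems exist (existence for \eqref{eq:Ch} is only proved later in the paper).
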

  \begin{proof}
    Let $\bar g$ be a solution to \eqref{eq:Ch} and let for short $g_1 := \bar g - \mathfrak{d}$. Since $\bar g-f \le \|\bar g-f\|_\infty = \mathfrak d$, we get $g_1 \le f$. Let $g_2\le f$ be a solution to \eqref{eq:C-cal-equiv} and let $c := \|f-g_2\|_\infty$.  
    
    Since  $g_1$ is feasible for \eqref{eq:C-cal-equiv}, we have $c \le \|f-g_1\|_\infty = \max (f-g_1) = \max (f-\bar g) + \mathfrak{d} \le 2\mathfrak d$. That is,
    \begin{equation}
      \label{eq:31}
      c \le \|f-g_1\|_\infty \le 2 \mathfrak{d}.
    \end{equation}

    On the other hand, if $g_3 := g_2 + c/2$ then $\mathfrak{d} \le \|f-g_3\|_\infty$. Let $\varphi\in\R$ be arbitrary. If $f(\varphi) \le g_3(\varphi)$ then $g_2(\varphi) \le f(\varphi) \le g_2(\varphi) + c/2$, so $|f(\varphi)-g_3(\varphi)| \le c/2$. If $f(\varphi) > g_3(\varphi)$ then $|f(\varphi)-g_3(\varphi)| = f(\varphi)-g_3(\varphi) = (f(\varphi)-g_2(\varphi)) - c/2 \le c/2$. So, 
    \begin{equation}
      \label{eq:32}
      \mathfrak d \le \|f-g_3\|_\infty \le c/2.
    \end{equation}

    From \eqref{eq:31} and \eqref{eq:32} it follows that $c=2\mathfrak d$, so \eqref{eq:31} gives that $g_1$ is a solution to \eqref{eq:C-cal-equiv}, and \eqref{eq:32} gives that $g_3$ is a solution to \eqref{eq:Ch}.
  \end{proof}

  We will prove in the present framework the well known geometrical theorem that at most one ellipse centered at the origin passes through three distinct points.

  \begin{lem}
    \label{lem:3-pts-determine}
    Let
    \begin{equation}
      \label{eq:in-pi-interval}
      \varphi_1 < \varphi_2 < \varphi_3 < \varphi_1 + \pi
    \end{equation}
    be fixed. Let $A:\R^3 \to \R^3$ be the linear operator
    $$
      A (a,b,c) := ( \mathfrak {R}(a,b,c)(\varphi_1),\mathfrak {R}(a,b,c)(\varphi_2),\mathfrak {R}(a,b,c)(\varphi_3)).
    $$
    Then $A$ is an isomorphism.
  \end{lem}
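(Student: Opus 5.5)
Since $A$ is a linear map from $\R^3$ to itself, it suffices to show that $\ker A=\{0\}$. So suppose $\mathfrak R(a,b,c)$ vanishes at $\varphi_1,\varphi_2,\varphi_3$; we will show $a=b=c=0$. The plan is to use the classical fact that a nonzero trigonometric polynomial of degree $1$ in the variable $\theta=2\varphi$ has at most two zeros on a period $[\theta_0,\theta_0+2\pi)$. The assumption \eqref{eq:in-pi-interval} says precisely that $\theta_i=2\varphi_i$ are three distinct points in the half-open interval $[2\varphi_1,2\varphi_1+2\pi)$, i.e. three distinct points modulo $2\pi$.

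To prove the zero-count, write $b\cos\theta+c\sin\theta=R\cos(\theta-\theta_0)$ with $R=\sqrt{b^2+c^2}$. Then we split into cases.

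\emph{Case $R=0$.} The function is the constant $a$. It vanishes at $\varphi_1$, so $a=0$ as well.

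\emph{Case $R>0$.} The equation becomes $\cos(\theta-\theta_0)=-a/R$.
\begin{itemize}
\item If $|a|>R$, there are no solutions at all.
\item If $|a|=R$, there is exactly one solution modulo $2\pi$.
\item If $|a|<R$, there are exactly two solutions modulo $2\pi$, namely $\theta_0\pm\arccos(-a/R)$.
\end{itemize}
In every subcase there are at most two zeros in a half-open interval of length $2\pi$. This contradicts the existence of the three distinct zeros $\theta_1,\theta_2,\theta_3$.

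Hence $a=b=c=0$, and $A$ is injective, therefore an isomorphism.

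An alternative route is to compute $\det A$ directly. After the half-angle substitution one expects a product of the form $\pm 4\sin(\varphi_2-\varphi_1)\sin(\varphi_3-\varphi_2)\sin(\varphi_3-\varphi_1)$. Every factor is nonzero, because each difference $\varphi_j-\varphi_i$ with $i<j$ lies strictly between $0$ and $\pi$.

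The only delicate point is making sure that \eqref{eq:in-pi-interval} really gives three distinct zeros modulo the period $2\pi$ of the doubled angle. This is exactly where the strict inequality $\varphi_3<\varphi_1+\pi$ is used.
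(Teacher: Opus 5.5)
Your proof is correct, but your main argument is not the one the paper uses. The paper proves the lemma in one line by evaluating the determinant of the matrix with rows $(1,\cos2\varphi_i,\sin2\varphi_i)$: $\det A=4\sin(\varphi_2-\varphi_1)\sin(\varphi_3-\varphi_1)\sin(\varphi_3-\varphi_2)>0$. This is exactly your ``alternative route'', and the sign is indeed $+$. To see it, subtract the first row from the others; this gives $\sin x+\sin y-\sin(x+y)=4\sin\frac x2\sin\frac y2\sin\frac{x+y}2$ with $x=2(\varphi_2-\varphi_1)$, $y=2(\varphi_3-\varphi_2)$.

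Your main proof instead shows $\ker A=\{0\}$ by counting the zeros of $a+R\cos(\theta-\theta_0)$. In effect you show that $1,\cos2\varphi,\sin2\varphi$ form a Haar (Chebyshev) system on any half-open interval of length $\pi$.

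The determinant gives an explicit closed formula with a sign, at the cost of a trigonometric identity. Your zero count is more conceptual and yields more for free. In your subcase $|a|=R$ the single zero is a double zero. For $|a|<R$ both zeros are simple, because the derivative $-R\sin(\theta-\theta_0)$ does not vanish there. So a nonzero $\mathfrak R(a,b,c)$ has at most two zeros per period \emph{counted with multiplicity}. That immediately gives the Hermite-type second half of Proposition~\ref{prop:elipses-3-pts-equial}, which the paper handles with a separate determinant $4\sin^2(\varphi_2-\varphi_1)$. It also matches the paper's remark that its method works for any family satisfying this interpolation property.
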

  \begin{proof}
    In co-ordinates
    $$
      A = \begin{pmatrix} 1 & \cos 2\varphi_1 & \sin 2\varphi_1 \\ 1 & \cos 2\varphi_2 & \sin 2\varphi_2 \\ 1 & \cos 2\varphi_3 & \sin 2\varphi_3 \end{pmatrix},
    $$
    so $\det A = 4\sin(\varphi_2 - \varphi_1)\sin(\varphi_3 - \varphi_1)\sin(\varphi_3 - \varphi_2) > 0$, since all three differences are in $(0,\pi)$.
  \end{proof}

  \begin{pro}
    \label{prop:elipses-3-pts-equial}
    Let $g_1,g_2\in\mathcal{C}$.

    If $g_1$ and $g_2$ coincide at three points as in \eqref{eq:in-pi-interval} then $g_1 = g_2$.

    If there are $\varphi_1, \varphi_2$ such that $0 < |\varphi_1-\varphi_2| < \pi$, and
    \begin{equation}
      \label{eq:g12-congr}
      g_1(\varphi_1) = g_2(\varphi_1),\  g_1'(\varphi_1) = g_2'(\varphi_1)\text{ and }g_1(\varphi_2) = g_2(\varphi_2),
    \end{equation}
    then $g_1 = g_2$.
  \end{pro}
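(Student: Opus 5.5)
Both parts reduce to linear algebra on the parameter triples. Write $g_j=-\tfrac12\log\mathfrak R(a_j,b_j,c_j)$ with $(a_j,b_j,c_j)\in K$, $j=1,2$. Since $t\mapsto-\tfrac12\log t$ is strictly monotone on $(0,\infty)$, the equality $g_1(\varphi)=g_2(\varphi)$ is equivalent to $\mathfrak R(a_1,b_1,c_1)(\varphi)=\mathfrak R(a_2,b_2,c_2)(\varphi)$. Put $(a,b,c):=(a_1-a_2,b_1-b_2,c_1-c_2)$ and $h:=\mathfrak R(a,b,c)$, which is linear in the parameters. It then suffices to show that $h\equiv0$: the map $(a,b,c)\mapsto\mathfrak R(a,b,c)$ is injective (for instance by Lemma~\ref{lem:3-pts-determine}), so $h\equiv 0$ forces equal triples and hence $g_1=g_2$.

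For the first statement, $h$ vanishes at $\varphi_1,\varphi_2,\varphi_3$ satisfying \eqref{eq:in-pi-interval}. This says $A(a,b,c)=0$ with $A$ as in Lemma~\ref{lem:3-pts-determine}. Since $A$ is an isomorphism, $(a,b,c)=0$, and we are done.

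For the second statement, first translate the derivative condition. Differentiating gives $g_j'=-\tfrac12\,\mathfrak R_j'/\mathfrak R_j$. At $\varphi_1$ the values $\mathfrak R_1(\varphi_1)=\mathfrak R_2(\varphi_1)>0$ agree, so $g_1'(\varphi_1)=g_2'(\varphi_1)$ gives $\mathfrak R_1'(\varphi_1)=\mathfrak R_2'(\varphi_1)$. Hence
$$h(\varphi_1)=h'(\varphi_1)=h(\varphi_2)=0.$$
Since $\mathfrak R(a,b,c)(\varphi+\pi)=\mathfrak R(a,b,c)(\varphi)$, we may shift $\varphi_2$ by $\pi$ if needed and assume $0<\varphi_2-\varphi_1<\pi$; all conditions are preserved. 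Now $h$ has the linear map
$$(a,b,c)\mapsto\big(h(\varphi_1),h'(\varphi_1),h(\varphi_2)\big),$$
with matrix rows $(1,\cos2\varphi_1,\sin2\varphi_1)$, $(0,-2\sin2\varphi_1,2\cos2\varphi_1)$ and $(1,\cos2\varphi_2,\sin2\varphi_2)$. The main step is to show this determinant is nonzero. One route is a direct computation: subtract the first row from the third and use angle-difference identities, which should give $\pm 2(1-\cos2(\varphi_2-\varphi_1))=\pm4\sin^2(\varphi_2-\varphi_1)\neq0$. Alternatively it follows as the limit of Lemma~\ref{lem:3-pts-determine}'s determinant divided by $(\varphi_2'-\varphi_1)$ as a third point $\varphi_2'\to\varphi_1$.

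A cleaner option avoids determinants altogether. Write $h=a+R\cos2(\varphi-\theta)$ with $R=\sqrt{b^2+c^2}$. If $h(\varphi_1)=h'(\varphi_1)=0$, then either $R=0$ (so also $a=0$), or $\sin2(\varphi_1-\theta)=0$ and $a=-R\cos2(\varphi_1-\theta)=\mp R$. In the latter case $h=R(\cos2(\varphi-\theta)\mp1)$, and this vanishes only where $2(\varphi-\theta)\equiv2(\varphi_1-\theta)\pmod{2\pi}$, i.e.\ at $\varphi\equiv\varphi_1\pmod\pi$. That contradicts $h(\varphi_2)=0$ with $0<|\varphi_1-\varphi_2|<\pi$. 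So $h\equiv0$ and $g_1=g_2$.

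I would present this amplitude–phase argument, as it is short and transparent. The main care is the reduction from $g$ to $\mathfrak R$, which relies on positivity of $\mathfrak R$ on $K$.
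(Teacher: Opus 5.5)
Your proof is correct. The reduction from $g_j$ to $\mathfrak R_j$ (positivity of $\mathfrak R$ on $K$, and $g_j'=-\tfrac12\mathfrak R_j'/\mathfrak R_j$) and the first part via Lemma~\ref{lem:3-pts-determine} are exactly the paper's argument.

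For the second part, the paper takes the route you list first. It asserts that the confluent matrix with rows $(1,\cos2\varphi_1,\sin2\varphi_1)$, $(0,-2\sin2\varphi_1,2\cos2\varphi_1)$, $(1,\cos2\varphi_2,\sin2\varphi_2)$ has determinant $4\sin^2(\varphi_2-\varphi_1)>0$. So your hedged sign is $+$. Your preliminary shift of $\varphi_2$ by $\pi$ is harmless but unnecessary in either route, since both only use $\varphi_2\not\equiv\varphi_1 \pmod\pi$.

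The amplitude--phase argument you would actually present is a genuinely different proof of that step. Instead of a determinant, it uses the rotation invariance of $\mathrm{span}\{1,\cos2\varphi,\sin2\varphi\}$ to write $h=a+R\cos2(\varphi-\theta)$. It then shows that a double zero at $\varphi_1$ forces either $h\equiv0$ or $h=R(\cos2(\varphi-\theta)\mp1)$, a one-signed function vanishing exactly on $\varphi_1+\pi\mathbb Z$.

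What each buys:
\begin{itemize}
\item Your route needs no computation and gives slightly more information: a nontrivial difference with a double zero has no other zeros modulo $\pi$ and does not change sign.
\item The paper's determinant keeps both parts uniform, as nonsingularity of an ordinary and of a confluent interpolation matrix for the system $1,\cos2\varphi,\sin2\varphi$. This matches the paper's stated viewpoint that the later arguments rely only on this interpolation property of the family.
\end{itemize}
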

  \begin{proof}
    Let $g_i = -(1/2)\log\mathfrak{R}(a_i,b_i,c_i)$ for $i=1,2$. If $g_1$ and $g_2$ coincide at three points as in \eqref{eq:in-pi-interval} then, since $\log$ is injective, $\mathfrak{R}(a_1,b_1,c_1)$ and $\mathfrak{R}(a_2,b_2,c_2)$ also coincide at those three points, so by Lemma~\ref{lem:3-pts-determine} we get $a_1=a_2$, $b_1=b_2$ and $c_1 = c_2$.

    Let now \eqref{eq:g12-congr} hold. Since
    $$
      g_i'(\varphi) = -\frac{1}{2}\left(\frac{d}{d\varphi}\mathfrak{R}(a_i,b_i,c_i)(\varphi)\right)/\mathfrak{R}(a_i,b_i,c_i)(\varphi),
    $$
    the three conditions translate to $\mathfrak{R}(a_1,b_1,c_1)$ and $\mathfrak{R}(a_2,b_2,c_2)$ agreeing at $\varphi_1$ in value and derivative, and at $\varphi_2$ in value. This implies 
    $$
      \begin{pmatrix}
        1 & \cos 2\varphi_1 & \sin 2\varphi_1 \\
        0 & -2\sin 2\varphi_1 & 2\cos 2\varphi_1 \\
        1 & \cos 2\varphi_2 & \sin 2\varphi_2
      \end{pmatrix}
      \begin{pmatrix} a_2-a_1 \\ b_2-b_1 \\ c_2-c_1 \end{pmatrix}
      = 0.
    $$
    The determinant of the above matrix is $4\sin^2(\varphi_2 - \varphi_1) > 0$, so $a_1=a_2$, $b_1=b_2$ and $c_1 = c_2$.
  \end{proof}
  
  We are ready to prove the uniqueness part of our main result.
  \begin{pro}
    \label{prop:unical}
    Let $\bar g\in \mathcal{C}$. Assume that there exist points $\varphi_i,\psi_i$, $i=1,2$, satisfying \eqref{eq:alternance-pts} and such that
    \begin{equation}
      \label{eq:alternance-pts-values-2}
      f(\varphi_i) - \bar g(\varphi_i) = \bar g(\psi_i) - f(\psi_i) =\|f-\bar g\|_\infty,\quad i=1,2.
    \end{equation}
    Then $\bar g$ is the unique solution to \eqref{eq:Ch}.
  \end{pro}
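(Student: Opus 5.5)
Let $\delta := \|f-\bar g\|_\infty$. We show two things: every $g\in\mathcal C$ satisfies $\|f-g\|_\infty\ge\delta$, and equality forces $g=\bar g$. Both follow from a Chebyshev-type alternance argument on the four points $\varphi_1<\psi_1<\varphi_2<\psi_2$, which lie in an interval of length less than $\pi$.

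Suppose $g\in\mathcal C$ satisfies $\|f-g\|_\infty\le\delta$, and put $h:=g-\bar g$. At the points $\varphi_i$ we have $f-\bar g=\delta$ and $|f-g|\le\delta$, so $h(\varphi_i)=(f-\bar g)(\varphi_i)-(f-g)(\varphi_i)\ge 0$. At the points $\psi_i$ we have $f-\bar g=-\delta$, so $h(\psi_i)\le 0$. Thus $h$ is $\ge0$ at $\varphi_1,\varphi_2$ and $\le0$ at $\psi_1,\psi_2$, and this sign pattern alternates along the cyclically ordered points $\varphi_1,\psi_1,\varphi_2,\psi_2$. Because $h$ is $\pi$-periodic, the next point is $\varphi_1+\pi$, where again $h\ge0$.

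Now pass to the trigonometric functions. Write $g=-\frac12\log R$ and $\bar g=-\frac12\log\bar R$ with $R,\bar R$ of the form $\mathfrak R(a,b,c)$ with $(a,b,c)\in K$, so both are positive. The sign of $h$ is opposite to the sign of $D:=R-\bar R$, and $D=\mathfrak R(\alpha,\beta,\gamma)$ is a trigonometric polynomial of degree one in $2\varphi$. Such a $D$, if nonzero, has at most two zeros (counted with multiplicity) in any half-open interval of length $\pi$. Indeed, as a function of $\theta=2\varphi$ it is $\alpha+\rho\cos(\theta-\theta_0)$, which has at most two roots per period $2\pi$ in $\theta$. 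Equivalently, Lemma~\ref{lem:3-pts-determine} shows that $D$ vanishing at three points of such an interval forces $D\equiv0$.

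A continuous $\pi$-periodic $D$ with weak sign alternation $\le0,\ge0,\le0,\ge0$ at four cyclic points should then have at least four zeros in a period, counting multiplicity. This gives $D\equiv0$. The main obstacle is this counting step, because the alternation is only weak and zeros may coincide with the nodes.

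To handle it, argue by cases.
\begin{itemize}
\item If all four values $D(\varphi_i)$, $D(\psi_i)$ are nonzero, the intermediate value theorem gives a zero in each of the four open arcs between consecutive nodes (cyclically, using $\varphi_1+\pi$). These are four distinct zeros in one period, which is impossible unless $D\equiv0$.
\item If some nodes are zeros, note that any three distinct zeros in a half-open period force $D\equiv0$ by Lemma~\ref{lem:3-pts-determine} after a cyclic shift. Combine the zero nodes with the intermediate value zeros found on the arcs whose endpoints have strict opposite signs; a short check of the configurations yields three distinct zeros.
\item The residual configuration is when exactly two zeros occur. Then one of them is a node where $D$ touches zero without changing sign, i.e.\ a double zero. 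Here use the second part of Proposition~\ref{prop:elipses-3-pts-equial}: agreement of $g$ and $\bar g$ in value and derivative at one point, plus value at another point, gives $g=\bar g$. Since $D$ is smooth, a zero at which $D$ keeps a constant sign on both sides has $D'=0$ there, so $g'=\bar g'$ at that point.
\end{itemize}

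Having $D\equiv0$, we get $g=\bar g$. So no $g$ does strictly better than $\bar g$ (for such $g$ the same argument applies and gives $g=\bar g$, a contradiction), and any $g$ with $\|f-g\|_\infty=\delta$ equals $\bar g$. Finally, a solution to \eqref{eq:Ch} exists: $\mathcal C$ together with the bound $\|f-g\|_\infty\le\delta$ gives a compact set of admissible parameters in $K$ (the parameters are bounded and stay away from $\partial K$ because $g$ is bounded). Hence $\bar g$ is the unique solution.
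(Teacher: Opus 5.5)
Your proof is correct and takes essentially the same route as the paper. Both use the weak alternation of $g-\bar g$ at $\varphi_1,\psi_1,\varphi_2,\psi_2$, then Lemma~\ref{lem:3-pts-determine} and both parts of Proposition~\ref{prop:elipses-3-pts-equial} to exclude a nonzero difference. Your case split on which nodes are zeros is only a different bookkeeping of the paper's two-case sign argument on $(\psi_1,\varphi_2)$. The one two-zero configuration is indeed $D$ vanishing at both $\varphi$'s or both $\psi$'s, where both zeros are touching zeros. Your closing existence/compactness paragraph is unnecessary, since you have already shown $\|f-g\|_\infty\ge\|f-\bar g\|_\infty$ for every $g\in\mathcal C$.
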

  \begin{proof}
    Let $\hat g \in \mathcal{C}$ be such that
    \begin{equation}
      \label{eq:51}
      \|f-\hat g\|_\infty \le \|f - \bar g\|_\infty.
    \end{equation}
    If we can prove that from this follows $\hat g = \bar g$ then obviously $\bar g$ is a solution to \eqref{eq:Ch} and, moreover, unique.
    
    Assume that
    \begin{equation}
      \label{eq:71}
      \bar g \neq \hat g.
    \end{equation}    
    Let $h := \bar g - \hat g$. We can write $h(\varphi_i) = (f-\hat g) (\varphi_i) - (f-\bar g) (\varphi_i)$. By \eqref{eq:alternance-pts-values-2} we have $(f-\bar g)(\varphi_i) = \|f-\bar g\|_\infty$, so $h(\varphi_i) = (f-\hat g)(\varphi_i) - \|f-\bar g\|_\infty \le \|f-\hat g\|_\infty - \|f-\bar g\|_\infty \le 0$ by \eqref{eq:51}. Thus $h(\varphi_i) \le 0$.
    Similarly, $h(\psi_i) = (f-\hat g)(\psi_i) + (\bar g -f)(\psi_i) = (f-\hat g)(\psi_i) + \|f-\bar g\|_\infty \ge \|f-\bar g\|_\infty  - \|f-\hat g\|_\infty \ge 0$. We have shown that
    \begin{equation}
      \label{eq:72}
      h(\varphi_i) \le 0 \le h(\psi_i),\quad i=1,2.
    \end{equation}
    Consequently, there exist $\xi_i\in[\varphi_i,\psi_i]$, $i=1,2$ (in particular, $\xi_1<\xi_2$) such that $h(\xi_i) = 0$ for $i=1,2$. If there were a third point in $[\varphi_1,\varphi_1+\pi)$ at which $h=0$, then Proposition~\ref{prop:elipses-3-pts-equial} would have implied $\bar g = \hat g$, contradicting \eqref{eq:71}, so
    \begin{equation}
      \label{eq:73}
      h(\varphi) \neq 0,\quad \forall \varphi \in [\varphi_1,\varphi_1+\pi) \setminus \{\xi_1,\xi_2\}.
    \end{equation}
    Also, if $h'(\xi_i) = 0$ for some $i=1,2$, then again Proposition~\ref{prop:elipses-3-pts-equial} implies $\bar g = \hat g$, so
    \begin{equation}
      \label{eq:74}
      h'(\xi_i) \neq 0,\quad i=1,2.
    \end{equation}
    From \eqref{eq:73} and $\xi_1\le\psi_1<\varphi_2\le\xi_2$ it follows that $h(\varphi) \neq 0$ for all $\varphi\in(\psi_1,\varphi_2)$. Since $h$ is continuous, it has a constant sign on $(\psi_1,\varphi_2)$, so there are two possibilities:

    \textsc{Case 1.} $h > 0$ on $(\psi_1,\varphi_2)$. Since $h(\varphi_2) \le 0$, see \eqref{eq:72}, we have $h(\varphi_2) = 0$, that is, $\xi_2=\varphi_2$. From \eqref{eq:74} it follows that $h'(\varphi_2) < 0$, so $h(\varphi_2+\delta) < 0$ for $\delta>0$ small enough. Since $h\neq0$ on $(\xi_2,\psi_2]$ by \eqref{eq:73}, we get $h(\psi_2)<0$, contradiction to \eqref{eq:72}.

    \textsc{Case 2.} $h < 0$ on $(\psi_1,\varphi_2)$. Since $h(\psi_1) \ge 0$, we have $\xi_1=\psi_1$, and $h'(\xi_1) < 0$, so $h$ is strictly positive to the left of $\xi_1$, implying $h(\varphi_1) > 0$, contradiction.
  \end{proof}

  \begin{pro}
    \label{prop:variation}
    Let $g\in\mathcal{C}$ be arbitrary. Let $\alpha < \beta < \alpha + \pi$. Let $\varepsilon > 0$.

    There exists $g_\varepsilon\in\mathcal{C}$ such that $\|g_\varepsilon-g\|_\infty < \varepsilon$ and
    $$
      g_\varepsilon(\varphi) < g(\varphi),\ \forall \varphi\in(\alpha,\beta),\quad g_\varepsilon(\varphi) > g(\varphi),\ \forall \varphi\in(\beta,\alpha+\pi).
    $$
  \end{pro}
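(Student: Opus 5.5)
Write $g = -\frac12\log \mathfrak R(a,b,c)$ with $(a,b,c)\in K$, and look for $g_\varepsilon = -\frac12\log\bigl(\mathfrak R(a,b,c) + t\,p\bigr)$. Here $t>0$ is small and $p = \mathfrak R(a_0,b_0,c_0)$ is a trigonometric polynomial of the same form. The requirement is that $p$ be positive on $(\alpha,\beta)$ and negative on $(\beta,\alpha+\pi)$; since $-\frac12\log$ is strictly decreasing, adding $tp$ with these signs gives exactly the desired inequalities $g_\varepsilon<g$ on $(\alpha,\beta)$ and $g_\varepsilon>g$ on $(\beta,\alpha+\pi)$.

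\textbf{Choosing $p$.} The natural candidate is
$$
p(\varphi) = \sin(\varphi-\alpha)\sin(\beta-\varphi) = \tfrac12\bigl(\cos(2\varphi-\alpha-\beta) - \cos(\beta-\alpha)\bigr),
$$
which clearly lies in the range of $\mathfrak R$. It vanishes only at $\alpha$ and $\beta$ modulo $\pi$. On $(\alpha,\beta)$ both sine factors are positive, so $p>0$. On $(\beta,\alpha+\pi)$ we have $\sin(\varphi-\alpha)>0$ while $\sin(\beta-\varphi)<0$, since $\varphi-\beta\in(0,\pi-(\beta-\alpha))\subset(0,\pi)$; hence $p<0$ there.

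\textbf{Staying in $\mathcal C$.} We need $(a,b,c)+t(a_0,b_0,c_0)\in K$ for small $t$. This holds because $K$, defined by the strict inequalities \eqref{eq:ellipse-cond}, is open. Equivalently, $\mathfrak R(a,b,c)$ has a positive minimum $m$ on the compact period, and $\mathfrak R(a,b,c)+tp\ge m-t\|p\|_\infty>0$. Positivity of $a+b\cos2\varphi+c\sin2\varphi$ for all $\varphi$ is exactly $a>\sqrt{b^2+c^2}$, so the perturbed triple is in $K$ and $g_\varepsilon\in\mathcal C$.

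\textbf{The estimate $\|g_\varepsilon-g\|_\infty<\varepsilon$.} We have
$$
|g_\varepsilon-g| = \tfrac12\Bigl|\log\bigl(1+tp/\mathfrak R(a,b,c)\bigr)\Bigr|,
$$
which is at most a constant times $t\|p\|_\infty/m$ once $t\|p\|_\infty/m\le\frac12$, so it tends to $0$ uniformly as $t\to0$. Choosing $t$ small enough finishes the proof.

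The only real step is finding a $p$ in the three-dimensional family with the prescribed sign pattern. The product-of-sines factorization settles this, so I expect no genuine obstacle; the rest is routine continuity.
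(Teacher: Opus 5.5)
Your proof is correct. It is exactly the shortcut the authors mention in the remark right after their proof (``simply adding an appropriate trigonometric polynomial to $\mathfrak{R}$''), whereas the paper deliberately argues differently.

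There, one sets $\gamma=(\alpha+\beta)/2$ and uses the isomorphism $A$ of Lemma~\ref{lem:3-pts-determine} to build a perturbed ellipse $g_t$. It agrees with $g$ at $\alpha$ and $\beta$ and satisfies $g_t(\gamma)=g(\gamma)-t$. The sign pattern is then deduced by zero counting from Proposition~\ref{prop:elipses-3-pts-equial}: three coincidences, or a coincidence in value and derivative at $\beta$ plus one at $\alpha$, force equality.

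The two constructions in fact produce the same functions. The difference $\mathfrak R(a_t,b_t,c_t)-\mathfrak R(a,b,c)$ vanishes at $\alpha$ and $\beta$ and is positive at $\gamma$. By Lemma~\ref{lem:3-pts-determine}, the functions $\mathfrak R(a_0,b_0,c_0)$ vanishing at $\alpha$ and $\beta$ form a one-dimensional space. So this difference is a positive multiple of your $p$, since $p(\gamma)=\sin^2\frac{\beta-\alpha}{2}>0$. Only the verification of the signs differs.

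Your explicit factorization is shorter and self-contained: it needs neither Lemma~\ref{lem:3-pts-determine} nor Proposition~\ref{prop:elipses-3-pts-equial}, and it gives a quantitative bound on $\|g_\varepsilon-g\|_\infty$. However, it relies on $\mathcal C$ being the image of an open subset of a linear space under a strictly monotone map. The paper's argument uses only the interpolation property, so it carries over verbatim to other approximating families satisfying Proposition~\ref{prop:elipses-3-pts-equial}; that is the authors' stated reason for choosing it.
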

  \begin{proof}
    Let $g = -(1/2)\log\mathfrak{R}(a,b,c)$ for some $(a,b,c)\in K$.
    Let $\gamma := (\alpha + \beta) / 2$. Let $A$ be the operator from Lemma~\ref{lem:3-pts-determine} for $\alpha,\gamma,\beta$ in place of the $\varphi$'s in \eqref{eq:in-pi-interval}.

    Let $(a_t,b_t,c_t) = A^{-1}(\mathfrak{R}(a,b,c)(\alpha),\exp (2t)\mathfrak{R}(a,b,c)(\gamma),\mathfrak{R}(a,b,c)(\beta))$. It is clear that for small enough $t > 0$ the point $(a_t,b_t,c_t) \in K$ and for $g_t := -(1/2)\log\mathfrak{R}(a_t,b_t,c_t)$
    $$
      g_t(\alpha) = g(\alpha),\ g_t(\beta) = g(\beta)\text{ and }g_t(\gamma) = g(\gamma) - t < g(\gamma).
    $$
    Since $(a_t,b_t,c_t)\to(a,b,c)$, as $t\to0$, and $\mathfrak{R}$ is continuous, we have $\|g-g_t\|_\infty < \varepsilon$ for $t$ small enough. Take $g_\varepsilon = g_t$ for some small enough $t$.

    If we assume that $g_\varepsilon(\xi) \ge g(\xi)$ for some $\xi\in(\alpha,\beta)$, then, since $g_\varepsilon(\gamma) < g(\gamma)$ there will be three different points in the interval $[\alpha,\beta]$ where $g_\varepsilon$ and $g$ coincide, which contradicts Proposition~\ref{prop:elipses-3-pts-equial}. So,
    $$
      g_\varepsilon(\varphi) < g(\varphi),\ \forall \varphi\in(\alpha,\beta).
    $$
    From this it follows in particular that $g_\varepsilon'(\beta) \ge g'(\beta)$. Since $g_\varepsilon'(\beta) = g'(\beta)$ would contradict Proposition~\ref{prop:elipses-3-pts-equial}, we have that $g_\varepsilon'(\beta) > g'(\beta)$. In particular $g_\varepsilon(\beta + \delta) > g(\beta+\delta)$ for all $\delta>0$ small enough. If there were $\xi\in(\beta,\alpha+\pi)$ such that $g_\varepsilon(\xi) \le g(\xi)$ this would contradict Proposition~\ref{prop:elipses-3-pts-equial}. So,
    $$
      g_\varepsilon(\varphi) > g(\varphi),\ \forall \varphi\in(\beta,\alpha + \pi).
    $$
  \end{proof}
  Of course, Proposition~\ref{prop:variation} can be proved by simply adding an appropriate trigonometric polynomial to $\mathfrak{R}$, but we chose to demonstrate a way that would work just as well for other families as long as they satisfy the interpolation Proposition~\ref{prop:elipses-3-pts-equial}. 
  
  \begin{pro}
    \label{prop:alternance}
    Let $\bar g$ be a solution to \eqref{eq:Ch}, that is, \begin{equation}
      \label{eq:alternance-*}
      \|f-\bar g\|_\infty = \mathfrak{d} = \min\{\|f-g\|_\infty:\ g\in \mathcal{C}\}.
    \end{equation}
    Then there exist points $\varphi_i,\psi_i$, $i=1,2$, satisfying \eqref{eq:alternance-pts} and \eqref{eq:alternance-pts-values-2}.
  \end{pro}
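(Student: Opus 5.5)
We argue by contradiction in the classical Chebyshev manner, using the perturbation of Proposition~\ref{prop:variation} in place of adding a polynomial. If $\mathfrak d=0$ the statement is trivial: every point is extremal for both signs, so any four points as in \eqref{eq:alternance-pts} work. Assume $\mathfrak d>0$, and put $e:=f-\bar g$, a continuous $\pi$-periodic function. Consider the closed $\pi$-periodic sets
$$E_+:=\{\varphi:\ e(\varphi)=\mathfrak d\},\qquad E_-:=\{\varphi:\ e(\varphi)=-\mathfrak d\}.$$
Both are nonempty. If, say, $E_-=\emptyset$, then $e>-\mathfrak d+\eta$ for some $\eta>0$, and the constant shift $\bar g+\eta/2\in\mathcal C$ (see \eqref{eq:inv-r}) has strictly smaller error, contradicting \eqref{eq:alternance-*}. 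The case $E_+=\emptyset$ is symmetric.

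Modulo $\pi$, the sets $E_+$ and $E_-$ are disjoint compact subsets of the circle $\R/\pi\mathbb Z$. Walking around the circle, the points of $E_+\cup E_-$ form ``blocks'' of constant sign. By compactness and uniform continuity, there are only finitely many sign changes. The required alternance \eqref{eq:alternance-pts} (a $+$ point, then $-$, then $+$, then $-$, within one period) exists precisely when there are at least four sign changes around the circle. The number of changes is even. So if the alternance fails, there are exactly two changes: $E_+$ lies in an arc $(\alpha,\beta)$ and $E_-$ lies in the complementary arc $(\beta,\alpha+\pi)$, with $\alpha,\beta\notin E_+\cup E_-$ and $\alpha<\beta<\alpha+\pi$. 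Concretely, choose $\alpha$ strictly between the last point of the $-$ block and the first point of the $+$ block, and $\beta$ likewise between the $+$ block and the $-$ block.

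Now apply Proposition~\ref{prop:variation} to $\bar g,\alpha,\beta$ with a small $\varepsilon$. It yields $g_\varepsilon$ with $g_\varepsilon>\bar g$ on $(\beta,\alpha+\pi)$ and $g_\varepsilon<\bar g$ on $(\alpha,\beta)$. Since on $(\alpha,\beta)$ we want to decrease $e=f-g$ near $E_+$, we must raise $g$ there, so we use the reflected perturbation. Concretely, apply Proposition~\ref{prop:variation} with the arc roles swapped, taking $\beta,\alpha+\pi$ in place of $\alpha,\beta$. This gives $g_\varepsilon>\bar g$ on $(\alpha,\beta)$ and $g_\varepsilon<\bar g$ on $(\beta,\alpha+\pi)$, and the condition $\beta<\alpha+\pi<\beta+\pi$ holds.

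The remaining step is the standard compactness estimate, which I expect to be the only delicate point. Take open neighbourhoods $U_+\supset E_+$ with $\overline{U_+}\subset(\alpha,\beta)$ and $U_-\supset E_-$ with $\overline{U_-}\subset(\beta,\alpha+\pi)$. Off $U_+\cup U_-$ we have $|e|\le\mathfrak d-\eta$ for some $\eta>0$, so choosing $\varepsilon<\eta$ keeps $|f-g_\varepsilon|<\mathfrak d$ there. On $\overline{U_+}$, by shrinking $U_+$ we may assume $e>0$. Then $f-g_\varepsilon=e-(g_\varepsilon-\bar g)$, where $0<g_\varepsilon-\bar g<\varepsilon$. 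The maximum over the compact set $\overline{U_+}$ gives $e-(g_\varepsilon-\bar g)<\mathfrak d$, and the difference stays above $-\mathfrak d$ once $\varepsilon<\mathfrak d$. The argument on $\overline{U_-}$ is symmetric. Hence $\|f-g_\varepsilon\|_\infty<\mathfrak d$, contradicting \eqref{eq:alternance-*}.

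Therefore there are at least four sign changes, which produce $\varphi_1<\psi_1<\varphi_2<\psi_2<\varphi_1+\pi$ with $\varphi_i\in E_+$ and $\psi_i\in E_-$. This is exactly \eqref{eq:alternance-pts-values-2}.
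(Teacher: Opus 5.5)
Your argument is correct and follows the paper's route: assume there is no alternance, separate the two extremal sets by points $\alpha<\beta<\alpha+\pi$, and use Proposition~\ref{prop:variation} on the appropriate arc to get a strictly better approximant. The differences are technical. The paper makes your informal block count explicit by fixing $\varphi_1\in\Phi$ and taking the first and last points of $\Psi$ in $[\varphi_1,\varphi_1+\pi]$. It also chooses $\alpha,\beta$ as zeros of $f-\bar g$, so that $g_\varepsilon=\bar g=f$ at the endpoints forces the relevant maxima into the open arcs; this replaces your neighbourhood-and-uniform-gap compactness estimate.
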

  \begin{proof}
    If $\mathfrak{d} = 0$ then $f=\bar g$ and any points satisfying \eqref{eq:alternance-pts} will do.

    Therefore, let $\mathfrak{d} > 0$ and
    $$
      \Phi := \{\varphi\in\R:\ f(\varphi) - \bar g(\varphi) = \mathfrak{d}\},\quad
      \Psi := \{\psi\in\R:\ \bar g(\psi) - f(\psi) = \mathfrak{d}\}.
    $$
    Because of the assumption $\mathfrak{d} > 0$ the sets $\Phi$ and $\Psi$ do not intersect.

    If $\Phi = \emptyset$ then $\max (f-\bar g) < \mathfrak{d}$. Consider $g_\varepsilon := \bar g - \varepsilon$. Because of \eqref{eq:inv-r} the functions $g_\varepsilon\in \mathcal{C}$. For $\varepsilon > 0$ small enough we still have $\max(f-g_\varepsilon) = \max(f-\bar g) + \varepsilon< \mathfrak{d}$, while $\min (f-g_\varepsilon) = \min(f-\bar g) + \varepsilon > - \mathfrak{d}$. So, $\|f-g_\varepsilon\|_\infty < \mathfrak{d}$, contradiction to \eqref{eq:alternance-*}. In a similar way, $\Psi\neq\emptyset$.

    Fix $\varphi_1\in\Phi$. Since $\Psi + \pi = \Psi$, because $f$ and $\bar g$ are $\pi$-periodic, and $\Psi\neq\emptyset$, we have $\Psi \cap [\varphi_1,\varphi_1+\pi]\neq\emptyset$. Let
    $$
      \psi_1 := \min \Psi \cap [\varphi_1,\varphi_1+\pi],\quad \psi_2 := \max \Psi \cap [\varphi_1,\varphi_1+\pi].
    $$
    Since $\varphi_1,\varphi_1+\pi\in\Phi$ and $\Phi\cap\Psi=\emptyset$, it follows that $\psi_i\in (\varphi_1,\varphi_1+\pi)$ for $i=1,2$.

    We claim that
    \begin{equation}
      \label{eq:alt-*}
      \Phi \cap [\psi_1,\psi_2] \neq \emptyset,
    \end{equation}
    (note Ader's Non-separation Principle).
    
    Assume \eqref{eq:alt-*} were false. Since $(f-\bar g)$ is a continuous function, which is $\mathfrak d$ at $\varphi_1$ and $-\mathfrak d$ at $\psi_1$, it vanishes somewhere in $(\varphi_1,\psi_1)$. Let $\alpha := \max \{\xi < \psi_1:\ f(\xi) = \bar g(\xi)\}$. In a similar way, let $\beta := \min\{\xi > \psi_2:\ f(\xi)=\bar g(\xi)\}$. We have then
    $$
      [\psi_1,\psi_2] \subset (\alpha,\beta),\quad [\alpha,\beta] \subset (\varphi_1,\varphi_1+\pi).
    $$
    Moreover, it is easy to see that
    $$
      \Phi\cap[\alpha,\beta] = \emptyset,
    $$
    because \eqref{eq:alt-*} was assumed false, and $f\le\bar g$ on $[\alpha,\psi_1]$ and $[\psi_2,\beta]$.

    By construction, $\Psi \cap ([\varphi_1,\alpha]\cup[\beta,\varphi_1+\pi]) = \emptyset$. Since $\Psi+\pi=\Psi$, the latter translates to
    $$
      \Psi \cap [\beta,\alpha+\pi] = \emptyset.
    $$
    So, there is $\varepsilon > 0$ such that
    $$
      \max_{[\alpha,\beta]} (f - \bar g) + \varepsilon < \mathfrak{d},\quad \max_{[\beta,\alpha+\pi]} (\bar g - f) + \varepsilon < \mathfrak{d}.
    $$
    From Proposition~\ref{prop:variation} there is $g_\varepsilon\in\mathcal{C}$ such that $\|g_\varepsilon - \bar g\|_\infty < \varepsilon$ and
    $$
      g_\varepsilon < \bar g\text{ on }(\alpha,\beta),\quad g_\varepsilon > \bar g\text{ on }(\beta,\alpha+\pi).
    $$
    We will show that
    \begin{equation}
      \label{eq:contra}
      \|f-g_\varepsilon\|_\infty = \max_{[\alpha,\alpha+\pi]}|f-g_\varepsilon| < \mathfrak d.
    \end{equation}
    Indeed, $\max_{[\alpha,\beta]}(f-g_\varepsilon) \le \max_{[\alpha,\beta]} (f-\bar g) + \|\bar g-g_\varepsilon\|_\infty < \mathfrak d$, and $\max_{[\alpha,\beta]}(g_\varepsilon - f) = g_\varepsilon(\xi) - f(\xi)$ for some $\xi\in[\alpha,\beta]$, but since $f=\bar g$ at $\alpha$ and $\beta$, and $\max_{[\alpha,\beta]}(g_\varepsilon - f) \ge g_\varepsilon(\psi_1) - f(\psi_1) = \mathfrak{d} + (g_\varepsilon(\psi_1)-\bar g(\psi_1)) \ge \mathfrak{d} - \|\bar g-g_\varepsilon\|_\infty > 0$; we have that $\xi\in(\alpha,\beta)$, thus $\max_{[\alpha,\beta]}(g_\varepsilon - f) = g_\varepsilon(\xi) - f(\xi) < \bar g(\xi) - f(\xi) \le \mathfrak d$.

    In a similar way, $\max_{[\beta,\alpha+\pi]}(f-g_\varepsilon) = f(\xi) - g_\varepsilon(\xi)$ for some $\xi\in (\beta,\alpha+\pi)$, so $< f(\xi) - \bar g(\xi) \le \mathfrak d$, and $\max_{[\beta,\alpha+\pi]}(g_\varepsilon-f) \le \max_{[\beta,\alpha+\pi]}(\bar g - f) + \|\bar g - g_\varepsilon\|_\infty < \mathfrak d$.

    Thus \eqref{eq:contra} is verified, but it contradicts \eqref{eq:alternance-*}. Therefore, there is $\varphi_2\in\Phi\cap(\psi_1,\psi_2)$ and $\varphi_i,\psi_i$ satisfy \eqref{eq:alternance-pts} and \eqref{eq:alternance-pts-values-2}.
  \end{proof}
  In view of the work so far the following completes our study.
  \begin{pro}
    \label{prop:final}
    The problem \eqref{eq:Ch} has a solution.
  \end{pro}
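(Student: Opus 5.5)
The plan is a compactness argument: take a minimizing sequence and show that it stays in a compact subset of the parameter cone $K$. Concretely, pick $g_n = -(1/2)\log\mathfrak{R}(a_n,b_n,c_n)\in\mathcal{C}$ with $(a_n,b_n,c_n)\in K$ and $\|f-g_n\|_\infty\to\mathfrak d$. We may assume $\|f-g_n\|_\infty\le \mathfrak d+1 =: M$ for all $n$. Since $f=\log r$ is continuous and $\pi$-periodic, it is bounded. Hence $|g_n|\le \|f\|_\infty+M$, and so
$$
  e^{-2L}\le \mathfrak{R}(a_n,b_n,c_n)(\varphi)\le e^{2L}\quad\forall\varphi,\ \forall n,\qquad L:=\|f\|_\infty+M.
$$

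The key step is to turn these uniform two-sided bounds on $\mathfrak{R}_n:=\mathfrak{R}(a_n,b_n,c_n)$ into bounds on the parameters. I would use the orthogonality of $1,\cos2\varphi,\sin2\varphi$ on $[0,\pi]$:
$$
  a_n=\frac1\pi\int_0^\pi\mathfrak{R}_n,\quad b_n=\frac2\pi\int_0^\pi\mathfrak{R}_n\cos2\varphi,\quad c_n=\frac2\pi\int_0^\pi\mathfrak{R}_n\sin 2\varphi.
$$
Alternatively, Lemma~\ref{lem:3-pts-determine} at three fixed points gives the same conclusion via $(a_n,b_n,c_n)=A^{-1}(\mathfrak{R}_n(\varphi_1),\mathfrak{R}_n(\varphi_2),\mathfrak{R}_n(\varphi_3))$. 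Either way $(a_n,b_n,c_n)$ is bounded, so after passing to a subsequence it converges to some $(a,b,c)\in\R^3$.

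The main obstacle is to show that the limit lies in the open cone $K$ and not merely in its closure. The lower bound $\mathfrak{R}_n\ge e^{-2L}$ passes to the limit, so $\mathfrak{R}(a,b,c)(\varphi)\ge e^{-2L}>0$ for all $\varphi$. Now $\min_\varphi\mathfrak{R}(a,b,c) = a-\sqrt{b^2+c^2}$, so $a>\sqrt{b^2+c^2}\ge0$, i.e.\ \eqref{eq:ellipse-cond} holds and $(a,b,c)\in K$. Set $\bar g := -(1/2)\log\mathfrak{R}(a,b,c)\in\mathcal{C}$.

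Finally, $\mathfrak{R}_n\to\mathfrak{R}(a,b,c)$ uniformly, because the parameters converge and the coefficients $1,\cos2\varphi,\sin 2\varphi$ are bounded. Moreover $\log$ is uniformly continuous on $[e^{-2L},e^{2L}]$, so $g_n\to\bar g$ uniformly. Therefore
$$
  \|f-\bar g\|_\infty=\lim_n\|f-g_n\|_\infty=\mathfrak d,
$$
and $\bar g$ solves \eqref{eq:Ch}. Combined with Propositions~\ref{prop:d=v/2}, \ref{prop:unical} and \ref{prop:alternance}, this yields Proposition~\ref{prop:C-cal-equiv}.
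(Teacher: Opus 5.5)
Your proof is correct and uses the same compactness argument as the paper: the uniform bound on the $g_n$ of a minimizing sequence bounds the parameters and keeps $\min_\varphi\mathfrak{R}_n = a_n-\sqrt{b_n^2+c_n^2}$ away from $0$, so the limit stays in the open cone $K$. The difference is only in bookkeeping. The paper uses tilt coordinates $(a,b',\theta)$ and reads off $a_n$ and $a_n-b_n'$ by evaluating $g_n$ at $\theta_n+\pi/4$ and $\theta_n+\pi/2$. You instead bound the Cartesian coefficients through Fourier integrals (or Lemma~\ref{lem:3-pts-determine}), and you spell out the paper's closing ``by continuity'' step as an explicit uniform-convergence argument.
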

  \begin{proof}
    Here it is easier to work with the standard form of the ellipses in polar co-ordinates. Take $(a,b,c)\in K$, that is, $a>\sqrt{b^2+c^2}$, and set 
    $$
      b':= \sqrt{b^2+c^2},\quad b = b'\cos 2\theta,\ c = b'\sin 2\theta.
    $$
    Then
    $$
      \mathfrak{R}(a,b,c) = a + b'\cos 2(\varphi-\theta).
    $$
    $K$ translates to
    $$
      K' := \{(a,b',\theta):\ a>b'\ge0,\ \theta\in[0,\pi]\},
    $$
    and
    $$
      \mathcal{C} = \{-(1/2)\log(a + b'\cos 2(\varphi-\theta)) : (a,b',\theta) \in K'\}.
    $$
    Let $(a_n,b_n',\theta_n)\in K'$ be such that for
    $$
      g_n(\varphi) := -\frac{1}{2}\log(a_n + b_n'\cos 2(\varphi-\theta_n))
    $$
    we have
    $$
      \lim_{n\to\infty} \|f-g_n\|_\infty = \mathfrak{d}.
    $$
    By taking a subsequence we may assume that $\theta_n\to\theta$, as $n\to\infty$. Since
    $$
      g_n(\theta_n+\pi/4) = -\frac{1}{2}\log a_n,
    $$
    and $\|g_n\|_\infty < \|f\|_\infty + \mathfrak{d} + 1$ for large enough $n$'s, the sequence $\{\log a_n\}_{n=1}^\infty$ is bounded, so there is $r>0$ such that
    $$
      r \le a_n \le 1/r,\quad\forall n\in\mathbb{N}.
    $$
    Therefore, we may assume that $a_n\to a >0$, as $n\to\infty$. We also assume that $b_n'\to b'\le a$, as $n\to\infty$. If $b'=a$ then
    $$
      g_n(\theta_n+\pi/2) = -\frac{1}{2}\log(a_n - b_n') \to \infty,\text{ as }n\to\infty,
    $$
    which is a contradiction. So, $(a,b',\theta)\in K'$ and by continuity it follows that the corresponding $g\in\mathcal{C}$ solves \eqref{eq:Ch}.
  \end{proof}

\end{document}